\DeclareMathAlphabet{\mathpzc}{OT1}{pzc}{m}{it}
\definecolor{dullmagenta}{rgb}{0.4,0,0.4}   
\definecolor{darkblue}{rgb}{0,0,0.4}
\definecolor{darkgreen}{rgb}{0,0.4,0}
\newtheorem{TheoremLetter}{Theorem}
{}
\newtheorem*{definition*}{Definition}
\newtheorem{theorem}{Theorem}
\newtheorem*{theorem*}{Theorem}
\newtheorem*{conjecture*}{Conjecture}
\newtheorem*{question*}{Question}
\newtheorem*{lemma*}{Lemma}
\newtheorem*{corollary*}{Corollary}
\newtheorem*{remark*}{Remark}
\numberwithin{equation}{section}
\numberwithin{theorem}{section}
\newcommand{\customlabel}[2]{%
   \protected@write \@auxout {}{\string \newlabel {#1}{{#2}{\thepage}{#2}{#1}{}} }%
   \hypertarget{#1}{#2}
}
\def\XXint#1#2#3{{\setbox0=\hbox{$#1{#2#3}{\int}$}
     \vcenter{\hbox{$#2#3$}}\kern-.5\wd0}}
\newcommand{\supp}{\operatorname{supp}}
\newcommand{\Enl}{\operatorname{Enl}}
\newcommand{\ind}{\mathbbm{1}}
\begin{document}

\author{Guillermo Rey}
\address{Universidad Autónoma de Madrid}
\email{guillermo.rey@uam.es}
\thanks{Research supported by grant PID2022-139521NA-I00, funded by MICIU/AEI/10.13039/501100011033}

\title{Merryfield's inequality for multiparameter martingales}
\maketitle
\begin{abstract}
  We extend an inequality of Merryfield, valid in the continuous setting, to discrete multiparameter martingales.
  As a consequence, we obtain the $L^p$ comparison of the maximal function with the square function:
  \begin{align*}
    E[(Sf)^p] \lesssim E[(f^*)^p]
  \end{align*}
  for regular multiparameter filtrations and $0 < p < \infty$.
\end{abstract}

\section{Introduction}

The Burkholder-Davis-Gundy inequality establishes the equivalence of two ways of defining the martingale Hardy space: through the maximal function and through the square function.
See for example \cite{MR400380}.
Specifically, for every martingale $f$, one has
\begin{align} \label{BDG}
  \|Sf\|_{L^p} \sim \|f^*\|_{L^p}
\end{align}
for all $p \geq 1$, where $f^*$ is Doob's maximal function and $Sf$ is the martingale square function.
In this article, we will work exclusively in the discrete setting, in which case these operators take the form
\begin{align*}
  f^* = \sup_{m \geq 0} |f_m|, \quad  Sf = \bigg( \sum_{m = 0}^\infty (\Delta f_m)^2 \bigg)^{\frac{1}{2}}, \quad\text{and}\quad \Delta f_m = \begin{cases}
    f_0 &\text{if } m = 0, \\
    f_m - f_{m-1} &\text{if } m \geq 1.
  \end{cases}
\end{align*}

When one works in the \emph{biparameter} setting, where the filtration and martingales are doubly indexed, the equivalence is more complicated.
In this situation, Doob's maximal function is defined similarly:
\begin{align*}
  f^* = \sup_{m,n \geq 0} |f_{m,n}|.
\end{align*}
The biparameter square function
\begin{align*}
  Sf = \bigg( \sum_{m,n \geq 0} (\Delta f_{m,n})^2 \bigg)^{\frac{1}{2}}
\end{align*}
involves the biparameter difference sequence $\Delta f_{m,n}$. When interpreted as an operator acting on sequences, $\Delta$ is the composition $\Delta_1 \circ \Delta_2 = \Delta_2 \circ \Delta_1$ of the two directional difference operators:
\begin{align*}
  \Delta_1 f_{m,n} = \begin{cases}
    f_{0, n} &\text{if } m = 0, \\
    f_{m,n} - f_{m-1, n} &\text{if } m \geq 1,
  \end{cases}
  \quad\text{and}\quad
  \Delta_2 f_{m,n} = \begin{cases}
    f_{m, 0} &\text{if } n = 0, \\
    f_{m,n} - f_{m, n-1} &\text{if } n \geq 1.
  \end{cases}
\end{align*}
In particular, when $m$ and $n$ are greater than $1$, we have $\Delta f_{m,n} = f_{m,n} - f_{m-1,n} - f_{m,n-1} + f_{m,n}$.

We are interested in the multiparameter version of the Burkholder-Davis-Gundy inequality.
The operators involved in the multiparamter case can be defined in a way similar to how the two-parameter operators were defined above,
and this will be done in full generality in Section \ref{section.multiparameter}.
To keep the notation light, in the introduction we will focus on the two-parameter case.

When $p > 1$, one can use vector-valued versions of the one-parameter Burkholder-Davis-Gundy inequalities to obtain the multiparameter ones, see \cite{MR2500519}.
However, when $p=1$ these arguments cannot be easily applied, and much less is known without additional hypotheses.

The $E[f^*] \lesssim E[Sf]$ side of the inequality can be shown for \emph{regular} filtrations satisfying the Cairoli-Walsh $(F_4)$ condition.
The argument, due to C. Fefferman, can be found in \cite{MR539351} for dyadic martingales,
and \cite{MR630308} for martingales with respect to a general regular filtration.
We say that a two-parameter filtration $\mathcal{F}_{m,n}$ is \emph{regular} if there exists a finite constant $R$ such that
\begin{align*}
  a_{m+1,n} \leq R a_{m,n} \quad\text{and}\quad a_{m,n+1} \leq R a_{m,n}
\end{align*}
for every positive biparameter martingale $a$, and every $m,n \geq 0$. For the dyadic filtration, this property is sometimes called the \emph{doubling} condition.

The \eqref{intro:f4} condition, introduced by Cairoli and Walsh in \cite{MR420845}, guarantees that $\mathcal{F}_{m+1,n}$ and $\mathcal{F}_{m,n+1}$ are conditionally independent given $\mathcal{F}_{m,n}$.
Equivalently:
\begin{align} \label{intro:f4} \tag{$F_4$}
  \mathbb{E}[ \mathbb{E}[f \,|\, \mathcal{F}_{m,n}] \,|\, \mathcal{F}_{m',n'}] = \mathbb{E}[f \,|\, \mathcal{F}_{\min(m,m'), \min(n,n')}] \quad \text{for all $m,n,m',n' \geq 0$},
\end{align}
where $\mathbb{E}[f \,|\, \mathcal{A}]$ denotes the conditional expectation of $f$ with respect to a sub $\sigma$-algebra $\mathcal{A}$.

Proving the reverse inequality $E[Sf] \lesssim E[f^*]$ for biparameter martingales turned out to be considerably more difficult. This was shown by Brossard in \cite{MR602392}
(see also \cite{MR1320508} for a more detailed explanation).
In particular, Brossard proved that
\begin{align} \label{intro.brossards_ineq}
  P(Sf > \lambda) \lesssim P(f^* \lambda) + \lambda^{-2}\mathbb{E}\Big[ f^*;\, (f^*)^2 \leq \lambda \Big]
\end{align}
holds for two-parameter martingales $f$ with respect to a regular filtration. Applying the layer-cake formula yields $E[Sf] \lesssim E[f^*]$\footnote{It is currently unknown whether \eqref{BDG} holds in the biparameter case when the filtration is not regular.}.
We note that under these hypotheses one has $E[(f^*)^p] \sim E[(Sf)^p]$ also for $0 < p \leq 1$.

A similar question can be asked in the continuous setting, where the maximal function $f^*$ is replaced by the non-tangential maximal function
\begin{align*}
  Nu(x) = \sup_{(y,t) \in \Gamma(x)} |u(y, t)|,
\end{align*}
defined for functions $u$ on the $n$-parameter upper half space $\mathbb{R}^n \times \mathbb{R}_{+}^n$, and $u$ is $n$-harmonic: a harmonic function of $(x_i, t_i)$ for each $1 \leq i \leq n$.
Here $\Gamma(x)$ denotes the cone centered at $x$: $\Gamma(x) = \{(y,t):\, |y_i - x_i| \leq t_i \text{ for all $1 \leq i \leq n$}\}$.

In the contiunous setting, the square function is replaced by the \emph{area integral}:
\begin{align*}
  Au(x) = \Bigg( \int_{\Gamma(x)} |\nabla_1 \dots \nabla_n u (y,t)|^2 \, dt\,dy \Bigg)^{\frac{1}{2}}.
\end{align*}
Here $\nabla_i$ is the vector-valued operator $(\partial_{x_i}, \partial_{t_i})$ and $\nabla_1 \dots \nabla_n$ is the tensor product of all $n$ operators.

With this defintion, one has
\begin{align} \label{gundy_stein}
  \|Au\|_{L^p} \lesssim \|Nu\|_{L^p}
\end{align}
for $0 < p < 2$. This was shown by Gundy and Stein in \cite{MR524328}. Like in the martingale setting, the reverse inequality is easier.

In \cite{MR794581}, Merryfield gave a real-variable proof of \eqref{gundy_stein}.
In particular, Merryfield deduces \eqref{gundy_stein} as a consequence of the following ``one-parameter'' result:
\begin{theorem}[Lemma 3.1 in \cite{MR794581}] \label{merryfield_thm}
  Suppose $u$ is harmonic on $\mathbb{R}^2_+$ and continuous on its closure. Suppose that $f$ is a function such that $f-k_f \in L^2(\mathbb{R})$ for some constant $k_f$,
  $Nu$ is bounded on $\supp f$, and either $Nu \in L^2$ or $f \in L^2$. Then
  \begin{align*}
    \int_{\mathbb{R}^2_+} |\nabla u(x,t)|^2 |\Phi_t \ast f(x)|^2 \, t \, dt \, dx \leq
    \int_{\mathbb{R}} |u(x,0)|^2 |f(x)|^2 \, dx + \int_{\mathbb{R}^2_+} |u(x,t)|^2 |\Psi_t \ast f(x)|^2 t^{-1} \, dt\, dx,
  \end{align*}
  where $\Psi$ is a vector-valued function in $C_c(\mathbb{R})$ with the same support as $\Phi$ and mean value $0$.
\end{theorem}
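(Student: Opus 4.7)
The strategy is to exploit the harmonicity of $u$ via Green's identity, combined with the scaling structure of $\Phi_{t}(x) = t^{-1}\Phi(x/t)$. Write $F(x,t) = \Phi_{t} \ast f(x)$. Since $\Delta u = 0$, one has $2|\nabla u|^{2} = \Delta(u^{2})$, so
\[
  2\int_{\mathbb{R}^{2}_{+}} |\nabla u|^{2} F^{2}\, t\, dx\, dt = \int_{\mathbb{R}^{2}_{+}} \Delta(u^{2})\, F^{2}\, t\, dx\, dt,
\]
and I would integrate by parts twice to transfer both derivatives from $u^{2}$ onto the weight $F^{2} t$. Because $F^{2} t$ vanishes at $t = 0$ while $\partial_{t}(F^{2} t)\big|_{t=0} = F(x,0)^{2} = f(x)^{2}$, the resulting boundary contribution is exactly $\int_{\mathbb{R}} u(x,0)^{2} f(x)^{2}\, dx$, matching the first term on the right-hand side.

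The bulk term becomes $\int u^{2} \Delta(F^{2} t)\, dx\, dt$, and expanding yields $\Delta(F^{2} t) = 2t|\nabla F|^{2} + 2tF\Delta F + 4F\,\partial_{t} F$. Here I would use the scaling of $\Phi_{t}$: differentiating gives $\partial_{x}\Phi_{t} = t^{-1}(\Phi')_{t}$ and $\partial_{t}\Phi_{t} = -t^{-1}\psi_{t}$ with $\psi(y) = \Phi(y) + y\Phi'(y) = (y\Phi(y))'$. The key observation is that $\psi$, being the derivative of a compactly supported function, has mean zero and support contained in $\supp \Phi$; iterating, $\Delta F$ is likewise a $t^{-2}$-rescaled convolution of $f$ against a compactly supported mean-zero kernel. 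All such kernels can be packaged, with appropriate constants, into a single vector-valued $\Psi \in C_{c}(\mathbb{R})$ with $\int \Psi = 0$ and $\supp \Psi \subset \supp \Phi$, so that the genuinely quadratic pieces arising from $|\nabla F|^{2}$ can be rewritten as $\int u^{2} |\Psi_{t} \ast f|^{2}\, t^{-1}\, dx\, dt$.

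The cross terms that arise from expanding $\Delta(u^{2} F^{2})$ — principally $\int u F t\, \nabla u \cdot \nabla F$ and, after an integration by parts in $t$, a term of the form $\int u F^{2} \partial_{t} u$ — would be handled by Cauchy-Schwarz against $\int |\nabla u|^{2} F^{2} t$ and $\int u^{2} |\nabla F|^{2} t$, followed by an AM-GM absorption argument that moves a fraction of $\int |\nabla u|^{2} F^{2} t$ back to the left-hand side. After absorption, the remaining right-hand side consists of the boundary term $\int u(x,0)^{2} f(x)^{2}\, dx$ and a controllable multiple of $\int u^{2} |\Psi_{t} \ast f|^{2}\, t^{-1}$, which can be turned into constant $1$ by rescaling $\Psi$.

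The main obstacle is the genuinely bilinear term $\int u^{2} F\, \Delta F\, t$, which is linear (not quadratic) in $f$ and therefore does not fit directly into $\int u^{2} |\Psi_{t} \ast f|^{2}\, t^{-1}$. I would address this by exploiting the identity $F\Delta F = \operatorname{div}(F\nabla F) - |\nabla F|^{2}$ and integrating by parts once more, converting this term into further cross terms (absorbed as above) together with an additional $\int u^{2} |\nabla F|^{2} t$ contribution, which is absorbed into $\Psi$. Convergence of every integral that appears, and hence validity of the integrations by parts, is secured by the hypotheses that $Nu$ is bounded on $\supp f$, that $f - k_{f} \in L^{2}$, and that either $Nu \in L^{2}$ or $f \in L^{2}$; the pointwise bound $|u(x,t)| \leq Nu(y)$ for $|y - x| \leq t$ is what keeps the $u F$ factors in the cross terms under control where $F$ is supported.
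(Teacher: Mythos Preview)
First, note that the paper does not prove this statement: it is quoted from Merryfield's article as background, and no argument for it appears anywhere in the paper. The closest the paper comes is its description, inside the proof of Theorem~\ref{Theorem1}, of Merryfield's strategy in the continuous case: write $|\nabla u|^{2}=\nabla u\cdot\nabla u$, integrate one factor of $\nabla u$ by parts, and use $\Delta u=0$ to eliminate the Laplacian term that appears when the derivative lands back on $\nabla u$. Your route through $2|\nabla u|^{2}=\Delta(u^{2})$ followed by Green's identity is the same computation reorganised (one extra integration by parts), so at the level of strategy your plan matches what the paper attributes to Merryfield.

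There is, however, a real gap in your handling of the cross term $\int u\,u_{t}\,F^{2}$ (equivalently, after your integration by parts in $t$, the term $\int u^{2}FF_{t}$). You assert it can be ``handled by Cauchy--Schwarz against $\int|\nabla u|^{2}F^{2}t$ and $\int u^{2}|\nabla F|^{2}t$'', but any Cauchy--Schwarz splitting of $\int u\,u_{t}\,F^{2}$ that produces a factor controlled by $\int|\nabla u|^{2}F^{2}t$ forces the complementary factor to be $\int u^{2}F^{2}t^{-1}$, \emph{not} $\int u^{2}|\nabla F|^{2}t$. Since $F=\Phi_{t}\ast f$ has a kernel of nonzero mean, the quantity $\int u^{2}F^{2}t^{-1}$ cannot be absorbed into $\int u^{2}|\Psi_{t}\ast f|^{2}t^{-1}$ for any mean-zero $\Psi$; it diverges already when $f\equiv 1$ and $u(\cdot,0)\not\equiv 0$. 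Your proposed fix for the companion term $\int u^{2}\,t\,F\Delta F$, via the identity $F\Delta F=\operatorname{div}(F\nabla F)-|\nabla F|^{2}$, does not help either: after integrating the divergence by parts against $u^{2}t$ you regenerate exactly the same $\int u^{2}FF_{t}$ contribution, so the argument is circular. This is precisely the spot where an additional idea is required, and your outline does not supply one.
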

We note that Merryfield's lemma has been extended to the setting
of product Hardy spaces on stratified groups in \cite{Cowling2025}.
However, it seems non-trivial to extend their proof to the martingale setting, or to more than two parameters, see the comment at the end their ``Part 1 of the proof of Proposition 5.2'' in \cite{Cowling2025}.

The purpose of this article is to obtain a result similar to Merryfield's in the setting of multiparameter martingales.
As immediate corollary, we can extend Brossard's result $E[(Sf)^p] \lesssim E[(f^*)^p]$ to arbitrarily many parameters.

As in Merryfield's article, the multiparameter result will follow from iteration of the single-parameter case.
The single-parameter case does not require extra hypotheses on the filtration, so we state it separately:

\begin{TheoremLetter} \label{TheoremA}
  Suppose that $(\mathcal{F}_m)_{m \geq 0}$ is a filtration in a probability space.
  Let $f$ and $a$ be martingales with respect to this filtration, and assume furthemore that $f_m a_n \in L^2$ for all $m,n \geq 0$.
  Then for every $M \geq 1$ we have
  \begin{align*}
    \mathbb{E}\Bigg( \sum_{m=1}^M (\Delta f_m)^2 a_{m-1}^2 \,\Big|\, \mathcal{F}_0 \Bigg) \leq
      20\mathbb{E}\Bigg( f_M^2 a_M^2 + \sum_{m=1}^M (f_m^2 + f_{m-1}^2)(\Delta a_m)^2 \,\Big|\, \mathcal{F}_0 \Bigg).
  \end{align*}
\end{TheoremLetter}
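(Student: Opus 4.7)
My plan is to exploit the observation that the pointwise product $fa$ fails to be a martingale only by a single covariation term $\mathbb{E}[\Delta f_m\,\Delta a_m\mid\mathcal{F}_{m-1}]$, which can be controlled by a conditional Cauchy--Schwarz inequality; this is the discrete analog of the integration-by-parts step in Merryfield's continuous argument. The first step is the product-rule identity $\Delta(fa)_m = a_{m-1}\Delta f_m + f_m\Delta a_m$, which I rearrange as $a_{m-1}\Delta f_m = \Delta(fa)_m - f_m\Delta a_m$ and square to obtain
\begin{equation*}
(\Delta f_m)^2 a_{m-1}^2 \;\leq\; 2(\Delta(fa)_m)^2 + 2 f_m^2(\Delta a_m)^2.
\end{equation*}
The second term is already of the form appearing in the claimed right-hand side, so after summing, the problem reduces to estimating $\mathbb{E}[\sum_m (\Delta(fa)_m)^2\mid\mathcal{F}_0]$.

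For that, I apply the algebraic telescoping identity $\sum_{m=1}^M (\Delta g_m)^2 = g_M^2 - g_0^2 - 2\sum_m g_{m-1}\Delta g_m$ with $g = fa$. Conditioning on $\mathcal{F}_0$, the boundary piece contributes $\mathbb{E}[f_M^2 a_M^2\mid\mathcal{F}_0]$, leaving the ``drift'' $-2\sum_m f_{m-1}a_{m-1}\Delta(fa)_m$ to be handled. The key observation is that after expanding $\Delta(fa)_m = f_{m-1}\Delta a_m + a_{m-1}\Delta f_m + \Delta f_m\,\Delta a_m$ and multiplying by the $\mathcal{F}_{m-1}$-measurable factor $f_{m-1}a_{m-1}$, the first two summands have vanishing $\mathcal{F}_{m-1}$-conditional expectation, so that
\begin{equation*}
\mathbb{E}[f_{m-1}a_{m-1}\,\Delta(fa)_m\mid\mathcal{F}_{m-1}] \;=\; f_{m-1}a_{m-1}\,\mathbb{E}[\Delta f_m\,\Delta a_m\mid\mathcal{F}_{m-1}].
\end{equation*}
Conditional Cauchy--Schwarz followed by Young's inequality with parameter $\epsilon > 0$ then bounds this in absolute value by $\tfrac{1}{2}\epsilon\,a_{m-1}^2\mathbb{E}[(\Delta f_m)^2\mid\mathcal{F}_{m-1}] + \tfrac{1}{2}\epsilon^{-1} f_{m-1}^2\mathbb{E}[(\Delta a_m)^2\mid\mathcal{F}_{m-1}]$.

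Summing, invoking the tower property, and plugging the resulting estimate back into the reduction of the first paragraph, the LHS $\mathbb{E}[\sum(\Delta f_m)^2 a_{m-1}^2\mid\mathcal{F}_0]$ reappears on the right with a coefficient $2\epsilon$. Choosing $\epsilon$ small enough (say $\epsilon = 1/4$) lets me absorb that copy, and bounding $f_{m-1}^2, f_m^2 \leq f_m^2 + f_{m-1}^2$ assembles the remaining contributions into the stated right-hand side with an explicit constant comfortably below $20$. The main conceptual obstacle is the drift step: one has to recognize that $fa$ is \emph{almost} a martingale, with drift governed entirely by the covariation $\mathbb{E}[\Delta f_m\Delta a_m\mid\mathcal{F}_{m-1}]$, and dominate that covariation by the geometric mean of the two conditional variances $\mathbb{E}[(\Delta f_m)^2\mid\mathcal{F}_{m-1}]$ and $\mathbb{E}[(\Delta a_m)^2\mid\mathcal{F}_{m-1}]$. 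The hypothesis $f_m a_n \in L^2$ for all $m,n$ is precisely what ensures that every cross product appearing in this manipulation is integrable, so that the conditional Cauchy--Schwarz and tower-property steps are all legitimate.
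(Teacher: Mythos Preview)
Your proof is correct, and it takes a genuinely different route from the paper's. The paper first invokes the conditional variance identity $\mathbb{E}[(\Delta f_m)^2\mid\mathcal{F}_{m-1}]=\mathbb{E}[\Delta(f^2)_m\mid\mathcal{F}_{m-1}]$ to replace $(\Delta f_m)^2 a_{m-1}^2$ by $\Delta(f^2)_m\,a_{m-1}^2$, then applies summation by parts to $\sum \Delta(f^2)(Ba)^2$, and finally controls the cross term $\sum\Delta(f^2)\Delta(a^2)$ by a pointwise inequality (using $\Delta(f^2)=(\Delta f)(f+Bf)$ and Young) that reproduces $\tfrac12(\Delta f)^2(Ba)^2$ on the right, which is absorbed. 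You instead work with the \emph{product} $g=fa$: the product rule gives $a_{m-1}\Delta f_m=\Delta g_m-f_m\Delta a_m$, squaring reduces the problem to $\sum(\Delta g_m)^2$, and the telescoping identity $\sum(\Delta g_m)^2=g_M^2-g_0^2-2\sum g_{m-1}\Delta g_m$ leaves only the drift of the semimartingale $g$, namely $f_{m-1}a_{m-1}\,\mathbb{E}[\Delta f_m\Delta a_m\mid\mathcal{F}_{m-1}]$, which you bound by conditional Cauchy--Schwarz and absorb.

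Both arguments end with the same absorption trick, but the underlying mechanisms differ: the paper manipulates the \emph{squares} $f^2$ and $a^2$ (this is the direct analogue of Merryfield's integration by parts, with the variance identity standing in for the vanishing Laplacian), whereas you manipulate the \emph{product} $fa$ and exploit that its compensator is the single covariation term. Your route is somewhat more elementary---it avoids the second application of the variance identity and the algebraic bound on $\Delta(f^2)\Delta(a^2)$---and yields a slightly better constant ($16$ versus the paper's effective $10$, both stated as $20$). The paper's route, on the other hand, makes the parallel with the continuous Merryfield lemma more transparent. One small point worth making explicit in your write-up: the absorption step requires the left-hand side to be almost surely finite, which follows from the hypothesis $f_m a_n\in L^2$ since each summand $(\Delta f_m)^2 a_{m-1}^2$ is then in $L^1$.
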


The proof of Theorem \ref{TheoremA} is based on Merryfield's proof
in \cite{MR794581}, but some non-trivial modifications are needed.
We explain in detail what needs to be changed in Section \ref{section.one_parameter}. In short: in the continuous setting
one integrates by parts an expression of the form
\begin{align*}
\int_{\mathbb{R}^2_+} (\nabla u \cdot \nabla u) (\text{other terms}).
\end{align*}
After applying the product rule to $\nabla u (\text{other terms})$, the
part where the derivatives hit $\nabla u$ becomes the Laplacian, which disappears since $u$ is harmonic. In the martingale setting this is no longer the case, and we have to proceed differently.

Iterating Theorem \ref{TheoremA} we can obtain a multiparameter version.
In this case, we require the filtration to satisfy the \eqref{intro:f4} condition.
\begin{TheoremLetter} \label{TheoremB}
  Let $\mathcal{F}$ be a $k$-parameter filtration satisfying the \eqref{f4} condition.
  Let $f$ and $a$ be $k$-parameter martingales.
  Suppose furthermore that $f_{m} a_{n} \in L^2$ for every $m,n \in \mathbb{N}^k$.
  Then, for every $M \in \mathbb{N}^k$ we have
  \begin{align*}
    \mathbb{E} \Bigg( \sum_{1 \leq m \leq M} (\Delta f_{m})^2 a^2_{m - 1} \Bigg) \lesssim \mathbb{E} \Bigg( \sum_{\mathcal{I} \subseteq [k]} \sum_{m \in \partial_{\mathcal{I}}(M)} (f^*_{m})^2 (\Delta_{\mathcal{I}} a_{m})^2 \Bigg).
  \end{align*}
  We will give precise definitions in Section \ref{section.multiparameter}, but briefly: the sum on the right hand side is taken over all subsets $\mathcal{I}$ of $\{1,2,\dots, k\}$
  and over the integer-lattice box $\partial_{\mathcal{I}}(M) \subseteq \{1 \leq m \leq M\}$ consisting on keeping all coordinates but those in $\mathcal{I}$ fixed to the upper corner $M$.
\end{TheoremLetter}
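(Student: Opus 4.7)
The plan is to iterate Theorem \ref{TheoremA} once per coordinate, using the $(F_4)$ condition at every step to supply the one-parameter martingale property needed to invoke it.

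Process the coordinates in the order $k, k-1, \ldots, 1$. For the first step, freeze $m' = (m_1, \ldots, m_{k-1})$ and rewrite
\begin{align*}
\sum_{1 \leq m \leq M}(\Delta f_m)^2 a_{m-1}^2 \;=\; \sum_{m'}\sum_{m_k=1}^{M_k}\bigl(\Delta_k F^{m'}_{m_k}\bigr)^2 \bigl(A^{m'}_{m_k-1}\bigr)^2,
\end{align*}
where $F^{m'}_{m_k} := (\Delta_{[k-1]} f)_{m', m_k}$ and $A^{m'}_{m_k} := a_{m'-1, m_k}$. The key consequence of $(F_4)$ is the identity $\mathbb{E}[b_{n'} \mid \mathcal{F}_n] = b_{n' \wedge n}$ for any $k$-parameter martingale $b$, valid even when some coordinates of $n$ are sent to $\infty$; from this one checks that $F^{m'}_{\bullet}$ and $A^{m'}_{\bullet}$ are both one-parameter martingales in the common filtration $(\mathcal{F}_{\infty,\ldots,\infty,m_k})_{m_k}$. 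Applying Theorem \ref{TheoremA} and taking full expectation bounds the left-hand side by the sum of a ``boundary'' piece (with $m_k$ fixed at $M_k$) and an ``interior'' piece that introduces a new $(\Delta_k a)^2$ factor while retaining the sum over $m_k$.

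Iterate the same move in coordinates $k-1, k-2, \ldots, 1$, processing every piece produced so far. A generic intermediate term has the form
\begin{align*}
\mathbb{E}\sum(\Delta_{\mathcal{J}} f)^2\, (\Delta_{\mathcal{K}} a)^2
\end{align*}
for disjoint $\mathcal{J}, \mathcal{K} \subseteq [k]$, where $\mathcal{J}$ records the unprocessed coordinates and $\mathcal{K}$ the coordinates already routed through the ``interior'' branch. Setting $F := \Delta_{\mathcal{J}\setminus\{i\}} f$ and $A := \Delta_{\mathcal{K}} a$ for any $i \in \mathcal{J}$, the same $(F_4)$-based argument provides a common one-parameter filtration in which both are martingales, and Theorem \ref{TheoremA} splits the term into a boundary piece (fixing $m_i = M_i$) and an interior piece (adding $i$ to $\mathcal{K}$). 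After $k$ iterations one is left with $2^k$ terms indexed by the final ``interior'' set $\mathcal{I} \subseteq [k]$; the $\mathcal{I}$-th term is a sum over $\partial_{\mathcal{I}}(M)$ of $(\Delta_{\mathcal{I}} a_m)^2$ multiplied by a product of $f^2$ evaluated at indices that differ from $m$ by at most $1$ in each coordinate. Bounding each such $f^2$ by $(f^*_m)^2$ and absorbing the combinatorial factor (at most a fixed power of $40$ depending only on $k$) into the implicit $\lesssim$ yields the stated inequality.

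The main obstacle is the verification, at every iteration step, that both the derived $f$- and $a$-processes are martingales with respect to a common one-parameter filtration; this is precisely where the $(F_4)$ condition is used, by making conditional expectations in different coordinates commute and thereby preserving the identity $\mathbb{E}[b_{n'} \mid \mathcal{F}_n] = b_{n' \wedge n}$ after the inactive coordinates have been sent to $\infty$. Once this is in place, the remainder of the argument---tracking the index shifts at each step and counting the factor-of-two splits coming from the $(F_m^2 + F_{m-1}^2)$ terms in Theorem \ref{TheoremA}---is routine bookkeeping.
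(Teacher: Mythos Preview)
Your proposal is correct and follows essentially the same route as the paper: both arguments iterate Theorem~\ref{TheoremA} once per coordinate, using \eqref{f4} to check that the derived processes $\Delta_{\mathcal{J}\setminus\{i\}} f$ and $\Delta_{\mathcal{K}} a$ are one-parameter martingales in the filtration obtained by sending the inactive coordinates to $\infty$, and then absorb the index shifts from the $(F_m^2+F_{m-1}^2)$ terms into $(f^*_m)^2$. The only cosmetic difference is that the paper packages the iteration as an induction on $k$ (apply Theorem~\ref{TheoremA} in the $k$-th direction, then invoke the $(k-1)$-parameter statement on each resulting piece), whereas you unfold the same recursion directly.
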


If, furthremore, we assume that the filtration is regular, then we can obtain Brossard's theorem in arbitrarily many parameters:
\begin{TheoremLetter} \label{TheoremC}
  For any $k$-parameter martingale $f$ with respect to a regular filtration satisfying the \eqref{f4} condition, we have
  \begin{align} \label{intro.comp}
    \mathbb{E}[(Sf)^p] \lesssim \mathbb{E}[(f^*)^p]
  \end{align}
  for all $0 < p < \infty$.
\end{TheoremLetter}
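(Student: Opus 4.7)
The case $1 < p < \infty$ is already covered by the vector-valued Burkholder-Davis-Gundy iteration recalled in the introduction, so only the range $0 < p \leq 1$ really needs the regularity and \eqref{intro:f4} hypotheses. The key consequence of Theorem \ref{TheoremB}, obtained by specializing $a \equiv 1$, is the $L^2$ bound
\begin{align*}
  \mathbb{E}[(Sg)^2] \lesssim_k \mathbb{E}[(g^*)^2]
\end{align*}
valid for every $k$-parameter martingale $g$ with $g^* \in L^2$: the weight $a_{m-1}^2$ is identically $1$, while the multi-index difference $\Delta_{\mathcal{I}} a_m$ vanishes whenever $\mathcal{I} \neq \emptyset$ unless $m_i = 0$ for every $i \in \mathcal{I}$. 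At most $2^k$ such boundary contributions survive, and each is bounded pointwise by $(g^*)^2$.

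To pass from $L^2$ to $L^p$ for $p \leq 1$ my plan is to follow Brossard's good-$\lambda$ strategy. Fix $\lambda > 0$ and, using regularity together with \eqref{intro:f4}, construct a stopped/truncated $k$-parameter martingale $f^{\lambda}$ which coincides with $f$ on $\{f^* \leq \lambda\}$ and satisfies $(f^{\lambda})^* \leq C \min(f^*, \lambda)$ pointwise. Chebyshev's inequality together with the $L^2$ bound just established then yields
\begin{align*}
  P(Sf > 2\lambda)
    \leq P(f^* > \lambda) + P(Sf^{\lambda} > 2\lambda)
    \lesssim P(f^* > \lambda) + \lambda^{-2}\, \mathbb{E}[\min(f^*, \lambda)^2].
\end{align*}
Multiplying by $p\lambda^{p-1}$ and integrating, Fubini combined with the elementary identity $\int_0^\infty \lambda^{p-3} \min(a,\lambda)^2 \, d\lambda = \frac{2a^p}{p(2-p)}$ (valid for $0 < p < 2$) then delivers $\mathbb{E}[(Sf)^p] \lesssim \mathbb{E}[(f^*)^p]$, completing the proof.

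The main obstacle is the construction of the truncated martingale $f^{\lambda}$ in the multiparameter setting. In the one-parameter case this is the standard stopping time $\tau = \inf\{m : |f_m| > \lambda\}$, but the partial order on $\mathbb{N}^k$ forbids a direct generalization: a naive componentwise stopping need not preserve the martingale property. This is precisely where \eqref{intro:f4} intervenes, providing the conditional independence that lets one iterate one-parameter stoppings coordinate by coordinate while preserving the martingale structure; regularity then converts the jump at the stopping boundary into the uniform bound $(f^\lambda)^* \leq C\lambda$. Once $f^{\lambda}$ is available with these two properties, the remainder of the argument is a routine assembly of the pieces above.
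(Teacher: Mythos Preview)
Your proposal has a genuine gap, and it sits exactly where you yourself flag ``the main obstacle.'' The $L^2$ estimate you extract from Theorem \ref{TheoremB} by setting $a\equiv 1$ is not using the theorem at all: under \eqref{intro:f4} the multiparameter differences $\Delta f_m$ are orthogonal, so $\mathbb{E}[(Sg)^2]=\mathbb{E}[g_M^2]\leq \mathbb{E}[(g^*)^2]$ is automatic. All of the work is then pushed into the construction of the truncated martingale $f^{\lambda}$, which you describe only in outline. In the $k$-parameter setting such a construction is precisely what is \emph{not} available by ``iterating one-parameter stoppings coordinate by coordinate'': the coordinatewise stopping times are not monotone in the remaining parameters, and the resulting stopped process is not a martingale in those directions. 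This difficulty is the historical reason Brossard (and Merryfield in the continuous case) introduced weighted inequalities in the first place.

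The paper's proof proceeds differently and uses Theorem \ref{TheoremB} in an essential way. One never stops $f$. Instead, with $E=\{f^*>\lambda\}$ and its regularity-enlargement $\Enl(E)$, one takes the martingale weight $a_m=\mathbb{E}[\ind_{\Enl(E)^c}\,|\,\mathcal{F}_m]$. Regularity forces $a_{m-1}\gtrsim 1$ off a second enlargement, and $f^*_m>\lambda\Rightarrow\Delta a_m=0$, so the weight behaves like a smooth cutoff adapted to the level set. Plugging this $a$ into Theorem \ref{TheoremB} yields Brossard's distributional inequality directly: the $\mathcal{I}=\emptyset$ term gives $\int_{E^c}(f^*)^2$, and the $\mathcal{I}\neq\emptyset$ terms are controlled by $\lambda^2\,\|S(1-a)\|_{L^2}^2\lesssim\lambda^2 P(E)$. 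The weight $a$ is the substitute for the multiparameter stopping that your sketch needs but does not supply.
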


When $p > 1$ this is well known. We will show that, when $p < 2$, Theorem \ref{TheoremC} can be deduced from Theorem \ref{TheoremB}.
We will follow the ideas of the analogous proofs in \cite{MR602392} and \cite{MR794581} and which are based on
proving \eqref{intro.brossards_ineq}, from which \eqref{intro.comp} follows after applying the layer cake formula.

One can use this inequality to prove a global endpoint estimate of the form
\begin{align*}
P(Sf > \lambda) \lesssim \mathbb{E}\Bigg[ \frac{|f|}{\lambda} \log\bigg( e + \frac{|f|}{\lambda}\bigg)^{k-1} \Bigg].
\end{align*}
This also follows from the layer cake formula, as shown in \cite{cmylp}. We skip these details, but refer the reader to the proof of Theorem 4.2 in their article.


\section{One-parameter martingales}
\label{section.one_parameter}
Let $(\Omega, \Sigma, P)$ be a probability space. In this section, we will assume that we have a one-parameter discrete filtration $(\mathcal{F}_{m})_{m \geq 0}$,
i.e.: a family of sub $\sigma$-algebras $\mathcal{F}_m \subseteq \Sigma$ indexed by $m \in \mathbb{Z}_{\geq 0}$ and satisfying
\begin{align*}
  \mathcal{F}_{m+1} \subseteq \mathcal{F}_m
\end{align*}
for all $m \geq 0$. We will denote by $\mathcal{F}_\infty$ the union
\begin{align*}
  \mathcal{F}_{\infty} = \bigcup_{m \geq 0} \mathcal{F}_m.
\end{align*}
We will \emph{not} assume that $\mathcal{F}_\infty$ necessarily coincides with $\Sigma$, or that $\mathcal{F}_0$ is trivial.

A martingale is a sequence $(f_m)_{m \geq 0}$ of integrable functions such that $f_m$ is $\mathcal{F}_m$-measurable and
\begin{align*}
  \mathbb{E}[f_{m+1} \,|\, \mathcal{F}_m] = f_m
\end{align*}
for all $m \geq 0$.

We now define the difference operator $\Delta$, which which we understand as acting on sequences. Specifically, if $f = (f_m)_{m \geq 0}$ then $\Delta f$ is another sequence given by
\begin{align*}
  (\Delta f)_m = \begin{cases}
    f_0 &\text{if } m = 0 \\
    f_m - f_{m-1} &\text{if } m \geq 1.
  \end{cases}
\end{align*}
Note that, for this definition, we do not need $f$ to be a sequence of \emph{functions}, $\Delta$ acts on sequences taking values in any vector space. This wider applicability will be useful later when we use the summation by parts formula.

We can now state the main theorem of this section.
\begin{theorem} \label{Theorem1}
  Suppose that $(\mathcal{F}_m)_{m \geq 0}$ is a filtration with respect to $(\Omega, P)$.
  Let $f$ and $a$ be martingales with respect to this filtration, and assume furthemore that $f_m a_n \in L^2$ for all $m,n \geq 0$.
  Then for every $M \geq 1$ we have
  \begin{align}
    \mathbb{E}\Bigg( \sum_{m=1}^M (\Delta f_m)^2 a_{m-1}^2 \,\Big|\, \mathcal{F}_0 \Bigg) \leq
      20\mathbb{E}\Bigg( f_M^2 a_M^2 + \sum_{m=1}^M (f_m^2 + f_{m-1}^2)(\Delta a_m)^2 \,\Big|\, \mathcal{F}_0 \Bigg).
  \end{align}
\end{theorem}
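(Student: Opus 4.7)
\emph{Summation by parts.} The plan is to mimic Merryfield's integration-by-parts argument in the discrete setting via the telescoping identity
\[
  f_M^2 a_M^2 - f_0^2 a_0^2 = \sum_{m=1}^M \Delta(f_m^2 a_m^2) = \sum_{m=1}^M \bigl( a_{m-1}^2 \Delta(f_m^2) + f_m^2 \Delta(a_m^2) \bigr),
\]
which is just the discrete product rule. Together with $\Delta(f_m^2) = (\Delta f_m)^2 + 2 f_{m-1}\Delta f_m$ this lets me isolate $L := \mathbb{E}[\sum_m (\Delta f_m)^2 a_{m-1}^2 \mid \mathcal{F}_0]$ on one side, yielding the identity
\[
  L = \mathbb{E}[f_M^2 a_M^2 \mid \mathcal{F}_0] - f_0^2 a_0^2 - 2 \mathbb{E}\Big[\sum_m f_{m-1}\Delta f_m \, a_{m-1}^2 \,\Big|\, \mathcal{F}_0\Big] - \mathbb{E}\Big[\sum_m f_m^2 \Delta(a_m^2) \,\Big|\, \mathcal{F}_0\Big].
\]

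\emph{Using the martingale property.} Conditioning on $\mathcal{F}_{m-1}$, the first cross term vanishes because $f_{m-1}a_{m-1}^2$ is $\mathcal{F}_{m-1}$-measurable and $\mathbb{E}[\Delta f_m \mid \mathcal{F}_{m-1}] = 0$. Expanding $\Delta(a_m^2) = 2a_{m-1}\Delta a_m + (\Delta a_m)^2$, the contribution $-\mathbb{E}[\sum f_m^2 (\Delta a_m)^2 \mid \mathcal{F}_0]$ is nonpositive and I drop it. In the remaining cross term $2\mathbb{E}[\sum f_m^2 a_{m-1}\Delta a_m \mid \mathcal{F}_0]$ I expand $f_m^2 = f_{m-1}^2 + 2 f_{m-1}\Delta f_m + (\Delta f_m)^2$; the $f_{m-1}^2$ piece vanishes under conditioning (as $f_{m-1}^2 a_{m-1}$ is $\mathcal{F}_{m-1}$-measurable and $a$ is a martingale), while the other two recombine into $\Delta(f_m^2) = \Delta f_m (f_m + f_{m-1})$.

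\emph{AM-GM and absorption.} After these reductions it suffices to bound
\[
  2\, \mathbb{E}\Big[\sum_m a_{m-1}\Delta a_m \cdot \Delta f_m \cdot (f_m + f_{m-1}) \,\Big|\, \mathcal{F}_0\Big]
\]
by $\epsilon L$ plus something controlled by the right-hand side. A single AM-GM inequality $2|xy|\leq \epsilon x^2 + \epsilon^{-1} y^2$ with $x = a_{m-1}\Delta f_m$ and $y = (f_m + f_{m-1})\Delta a_m$, combined with $(f_m + f_{m-1})^2 \leq 2(f_m^2 + f_{m-1}^2)$, bounds the above by $\epsilon L + 2\epsilon^{-1} R$, where $R = \mathbb{E}[\sum(f_m^2 + f_{m-1}^2)(\Delta a_m)^2 \mid \mathcal{F}_0]$ is the sum appearing on the right-hand side of the theorem. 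Choosing $\epsilon$ small (say $\epsilon = 1/2$) and absorbing $\epsilon L$ into the left-hand side yields the claimed inequality with a constant no worse than $20$. The absorption is legitimate because the hypothesis $f_m a_n \in L^2$ makes $(\Delta f_m)^2 a_{m-1}^2 \leq 2(f_m a_{m-1})^2 + 2(f_{m-1}a_{m-1})^2$ integrable, so $L$ is finite.

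\emph{Main obstacle.} The essential difference from Merryfield's continuous proof is that the identity $\Delta u^2 = 2 |\nabla u|^2$ for harmonic $u$, which makes one term disappear after a single integration by parts, has no direct discrete analogue. The discrete fix exploits the martingale property of both $f$ and $a$ simultaneously: the cross terms that initially appear either vanish after conditioning on $\mathcal{F}_{m-1}$ or recombine, and the surviving one admits a clean AM-GM bound only after the factorization $\Delta(f_m^2) = \Delta f_m(f_m + f_{m-1})$ is used to pair $a_{m-1}\Delta f_m$ against $(f_m+f_{m-1})\Delta a_m$. Arranging the recombination so that the absorbing step closes with a universal constant is the main technical point of the proof.
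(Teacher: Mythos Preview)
Your proof is correct and follows essentially the same route as the paper: a discrete product rule / summation by parts, use of the martingale property of both $f$ and $a$ to kill cross terms (the paper packages this as the conditional variance identity $\mathbb{E}[(\Delta f_m)^2\,|\,\mathcal{F}_{m-1}]=\mathbb{E}[\Delta(f^2)_m\,|\,\mathcal{F}_{m-1}]$, but it is the same step), the factorization $\Delta(f^2)=(\Delta f)(f+Bf)$, and finally Young's inequality with absorption of $\tfrac12 L$. Your bookkeeping is slightly more streamlined---you drop the nonpositive term $-\sum f_m^2(\Delta a_m)^2$ before applying AM--GM, which actually yields a constant $8$ rather than the paper's $10$, both safely under the stated $20$.
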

\begin{proof}
  We would like to apply summation by parts to the sum in the left-hand-side. However, the square is preventing us from doing so.
  One way to get around this obstacle is by writing
  \begin{align*}
    \sum_{m=1}^M (\Delta f_m)^2 a_{m-1}^2 &= \sum_{m=1}^M (\Delta f_m) (\Delta f_m) a_{m-1}^2 \\
    &= \text{(boundary terms)} - \sum_{m=0}^M f_m \Delta( (\Delta f)a_{\cdot -1}^2 )_m.
  \end{align*}
  This is, essentially, the approach that Merryfield takes in \cite{MR794581}. The issue with this idea is that, in our setting, we pick up a double difference factor $\Delta^2 f$.
  In Merryfield's article, these double differences correspond to the Laplacian of $f$, which disappears since the functions there are harmonic.
  We must take a different route since our setting does not enjoy that simplification.

  Recall, from elementary probability, the variance identity $\mathbb{E}[(f - \mathbb{E}f)^2] = \mathbb{E}[f^2] - (\mathbb{E}f)^2$.
  One can easily generalize this formula to conditional expectations:
  \begin{align} \label{variance_identity}
    \mathbb{E}[(\Delta f_m)^2 \,|\, \mathcal{F}_{m-1}] = \mathbb{E}[f_m^2 - f_{m-1}^2 \,|\, \mathcal{F}_{m-1}] = \mathbb{E}[ \Delta (f^2)_m \,|\, \mathcal{F}_{m-1}]
  \end{align}
  for $m \geq 1$.

  Here, $\Delta(f^2)$ is the sequence given by the difference operator applied to $f^2$, which is the sequence defined pointwise by $(f^2)_m = f_m^2$.
  In particular, note that $f^2$ is \emph{not} a martingale, and thus $\Delta(f^2)$ is not a martingale difference sequence.
  This, however, will not be a problem for us.

  Let us recall the summation-by-parts framework in our setting. Define the shift operator $B$, acting on sequences, by
  \begin{align*}
    (Bf)_m = \begin{cases}
      0 &\text{if } m = 0 \\
      f_{m-1} &\text{if } m \geq 1.
    \end{cases}
  \end{align*}
  We then have $\Delta f = f - Bf$. The product rule takes the following form
  \begin{align*}
    \Delta(fg) = f\Delta g + (\Delta f)Bg.
  \end{align*}
  We can use this formula, together with the identity $\sum_{m=0}^M (\Delta f)_m = f_M$ to arrive at the summation by parts formula that we will use:
  \begin{align*}
    \sum_{m=0}^M (\Delta f)_m (Bg)_m = f_M g_M - \sum_{m=0}^M f_m (\Delta g)_m.
  \end{align*}
  We will omit the dependence on $m$ when it does not introduce confusion, and denote summation over $\{a, \dots, b\}$ by $\sum_{[a, b]}$.
  Then the above formula takes the form
  \begin{align} \label{sbp}
    \sum_{[0,M]} (\Delta f)Bg = f_M g_M - \sum_{[0,M]} f \Delta g.
  \end{align}

  Proceeding with the proof, we use the conditional variance identity \eqref{variance_identity} together with the fact that $a_{m-1}^2$ is $\mathcal{F}_{m-1}$-measurable.
  \begin{align*}
    \mathbb{E} \Bigg( \sum_{m=1}^M (\Delta f_m)^2 a_{m-1}^2 \,\Big|\, \mathcal{F}_{0} \bigg) &= \mathbb{E}\Bigg( \sum_{m=1}^M \mathbb{E}\Big( (\Delta f_m)^2 a_{m-1}^2 \,|\, \mathcal{F}_{m-1} \Big) \,\Big|\, \mathcal{F}_0 \Bigg) \\
    &= \mathbb{E}\Bigg( \sum_{m=1}^M \mathbb{E}\Big( \Delta (f^2)_m a_{m-1}^2 \,|\, \mathcal{F}_{m-1} \Big) \,\Big|\, \mathcal{F}_0 \Bigg) \\
    &= \mathbb{E}\Bigg( \sum_{[1,M]} \Delta(f^2) (Ba)^2 \,\Big|\, \mathcal{F}_0 \Bigg).
  \end{align*}
  Here we have used that $B$ is multiplicative, so in particular $B(f^2) = (Bf)^2$. We will use this in what follows without further mention.

  Now, we use the summation by parts formula \eqref{sbp}:
  \begin{align*}
    \sum_{[1,M]} \Delta(f^2) (Ba)^2 = f_M^2 a_M^2 - \sum_{[0,M]} f^2 \Delta(a^2).
  \end{align*}
  It would be convenient to use the same technique now to pull the square out of $\Delta(a^2)$,
  but the issue is that $\Delta(a^2)$ is not being multiplied by a \emph{predictable} sequence like in the previous situation.
  Instead, we can write
  \begin{align*}
    f_M^2 a_M^2 - \sum_{[0,M]} f^2 \Delta(a^2) &= f_M^2 a_M^2 - f_0^2 a_0^2  - \sum_{[1,M]} f^2 \Delta(a^2) \\
    &\leq f_M^2 a_M^2 - \sum_{[1,M]}(f^2 - Bf^2)\Delta(a^2) - \sum_{[1,M]}(Bf^2)\Delta(a^2) \\
    &= f_M^2 a_M^2 - \sum_{[1,M]}\Delta(f^2)\Delta(a^2) - \sum_{[1,M]}(Bf^2)\Delta(a^2).
  \end{align*}
  Now $Bf^2$ \emph{is} predictable, so by \eqref{variance_identity} and using that $a$ is also a martingale we obtain
  \begin{align*}
    \mathbb{E}\Bigg( - \sum_{[1,M]}(Bf^2)\Delta(a^2) \,\Big|\, \mathcal{F}_0 \Bigg) &= \mathbb{E}\Bigg( - \sum_{[1,M]}(Bf)^2(\Delta a)^2 \,\Big|\, \mathcal{F}_0 \Bigg) \leq 0.
  \end{align*}

  We have thus arrived at the inequality
  \begin{align} \label{1p::intermediate}
    \mathbb{E}\Bigg( \sum_{[1,M]} (\Delta f)^2 Ba^2 \,\Big|\, \mathcal{F}_0 \Bigg) \leq \mathbb{E}\Bigg( f_M^2 a_M^2 - \sum_{[1,M]} \Delta (f^2) \Delta(a^2) \,\Big|\, \mathcal{F}_0 \Bigg).
  \end{align}

  The difference operator applied to the square of a sequence takes the particularly simple form $\Delta(f^2) = (\Delta f)(f + Bf)$.
  We can use this identity to bound each term in the sum:
  \begin{align*}
    |\Delta (f^2) \Delta(a^2)| &= |\Delta(f^2)(\Delta a)(a + Ba)| \\
    &= |\Delta(f^2)(\Delta a)(\Delta a + 2Ba)| \\
    &\leq |\Delta (f^2)(\Delta a)^2| + 2|\Delta(f^2)(\Delta a)Ba| \\
    &\leq (f^2 + Bf^2)(\Delta a)^2 + 2|(\Delta f)(f + Bf)(\Delta a)Ba|.
  \end{align*}
  The first term is already what appears on the right hand side of the inequality that we want to prove.
  For the second term we use Young's inequality in the form $2|xy| \leq x^2 + y^2$:
  \begin{align*}
    2|(\Delta f)(f + Bf)(\Delta a)Ba| &\leq \frac{1}{2}(\Delta f)^2 Ba^2 + 2 (f+Bf)^2 (\Delta a)^2 \\
    &\leq \frac{1}{2}(\Delta f)^2 Ba^2 + 4 (f^2+Bf^2) (\Delta a)^2.
  \end{align*}
  Putting it all together, we have
  \begin{align*}
    |\Delta (f^2) \Delta(a^2)| &\leq |(f^2 + Bf^2)(\Delta a)^2| + \frac{1}{2}(\Delta f)^2 Ba^2 + 4 (f^2+Bf^2) (\Delta a)^2 \\
    &\leq 5(f^2+Bf^2) (\Delta a)^2 + \frac{1}{2}(\Delta f)^2 Ba^2.
  \end{align*}

  Plugging this into \eqref{1p::intermediate} and subtracting $\frac{1}{2}(\Delta f)^2 Ba^2$ on both sides, we arrive at
  \begin{align*}
    \mathbb{E}\Bigg( \sum_{[1,M]} (\Delta f)^2 Ba^2 \,\Big|\, \mathcal{F}_0 \Bigg) \leq \mathbb{E}\Bigg( 2f_M^2 a_M^2 + 10\sum_{[1,M]} (\Delta a)^2 (f^2+Bf^2) \,\Big|\, \mathcal{F}_0 \Bigg).
  \end{align*}

\end{proof}

\section{Multiparameter martingales}
\label{section.multiparameter}

In this section, we prove a multiparameter version of Theorem \ref{Theorem1}.
The proof requires a lighter notation in the two-parameter setting, so we will prove that case first and then, in the next subsection, the case of an arbitrary number of parameters.
Both proofs are essentially the same, so the arguments will be repeated. We hope this is a small price to pay in the benefit of exposition.

\subsection{Two parameters}
As in the previous section, let $(\Omega, \Sigma, P)$ be a probability space.
A two-parameter filtration is a doubly-indexed sequence $(\mathcal{F}_{m,n})_{m,n \geq 0}$ of sub $\sigma$-algebras of $\Sigma$ which is increasing in each parameter:
\begin{align*}
  \mathcal{F}_{m,n} \subseteq \mathcal{F}_{m+1,n} \quad \text{and} \quad \mathcal{F}_{m,n} \subseteq \mathcal{F}_{m,n+1}.
\end{align*}

We will assume that our filtration satisfies the \eqref{f4_2p} condition which, we recall, can be stated as the identity
\begin{align} \label{f4_2p} \tag{$F_4$}
  \mathbb{E}[ \mathbb{E}[f \,|\, \mathcal{F}_{m,n}] \,|\, \mathcal{F}_{m',n'}] = \mathbb{E}[f \,|\, \mathcal{F}_{\min(m,m'), \min(n,n')}] \quad \text{for all $m,n,m',n' \geq 0$}.
\end{align}

A two-parameter martingale (with respect to the $\mathcal{F}_{m,n}$ filtration) is a sequence $(f_{m,n})_{m,n \geq 0}$ of integrable functions such that each $f_{m,n}$ is $\mathcal{F}_{m,n}$-measurable,
and such that
\begin{align*}
  \mathbb{E}[f_{m+1,n} \,|\, \mathcal{F}_{m,n}] = \mathbb{E}[f_{m,n+1} \,|\, \mathcal{F}_{m,n}] = f_{m,n}.
\end{align*}

The two-paremeter difference operator is the composition of the directional difference operator in each direction.
We introduce these first:
\begin{align*}
  \Delta_1 f_{m,n} = \begin{cases}
    f_{0,n} &\text{if } m = 0, \\
    f_{m,n} - f_{m,n} &\text{if } m \geq 1.
  \end{cases} \quad\quad
  \Delta_2 f_{m,n} = \begin{cases}
    f_{m,0} &\text{if } n = 0, \\
    f_{m,n-1} - f_{m,n} &\text{if } n \geq 1.
  \end{cases}
\end{align*}
It is easy to see that these directional-difference operators commute with each other, and we define the ``full'' difference operator as their composition:
\begin{align*}
  \Delta = \Delta_1 \circ \Delta_2 = \Delta_2 \circ \Delta_1.
\end{align*}
In particular, when $m$ and $n$ are both greater than $1$, we have
\begin{align*}
  \Delta f_{m,n} = f_{m,n} - f_{m-1,n} - f_{m,n-1} + f_{m-1,n-1}.
\end{align*}

We can now begin the generalization of Theorem \ref{Theorem1} to two parameters. Assume $f$ and $a$ are two-parameter martingales, we want to establish an upper bound for
the expected value of
\begin{align*}
  \sum_{\substack{1 \leq m \leq M \\ 1 \leq n \leq N}} (\Delta f_{m,n})^2 a_{m-1,n-1}^2.
\end{align*}
We can write this experession as
\begin{align*}
  \sum_{m=1}^M \sum_{n=1}^N (\Delta_2(\Delta_1 f)_{m,n})^2 a_{m-1,n-1}^2.
\end{align*}
Now, define $\widetilde{f}_{m,n} = \Delta_1 f_{m,n}$ and $\widetilde{a}_{m,n} = a_{m-1,n}$. We can re-write the above expression as
\begin{align*}
  \sum_{m=1}^M \sum_{n=1}^N (\Delta_2 \widetilde{f}_{m,n})^2 (\widetilde{a}_{m,n-1})^2.
\end{align*}

Now, we want to interpret this a one-parameter martingale and apply Theorem \ref{Theorem1}. To this end, define the sequence of $\sigma$-alegbras
\begin{align*}
  \mathcal{G}_n^1 = \sigma\Bigg( \bigcup_{m=0}^\infty \mathcal{F}_{m,n}\Bigg).
\end{align*}
This forms a one-parameter filtration. Also, one can see that for every fixed $m$, the sequences $(\widetilde{f}_{m,n})_{n \geq 0}$ and $(\widetilde{a}_{m,n})_{n \geq 0}$ are martingales with respect to $(\mathcal{G}_n^1)_{n \geq 0}$.
Indeed, for all $j \geq 0$, we can use the \eqref{f4_2p} condition twice to obtain:
\begin{align*}
  \mathbb{E}[f_{m,n+1} \,|\, \mathcal{F}_{j,n}] &= \mathbb{E}\Big( \mathbb{E}[f_{m,n+1} \,|\, \mathcal{F}_{m,n+1}] \,|\, \mathcal{F}_{j,n} \Big) \\
  &= \mathbb{E}\Big( f_{m,n+1} \,|\, \mathcal{F}_{\min(m,j), n} \Big) \\
  &= \mathbb{E}\Big( \mathbb{E}(f_{m,n+1} \,|\, \mathcal{F}_{m,n}) \,|\, \mathcal{F}_{\min(m,j), n} \Big) \\
  &= \mathbb{E}\Big( f_{m,n} \,|\, \mathcal{F}_{\min(m,j), n} \Big).
\end{align*}
In particular,
\begin{align*}
  \mathbb{E}[f_{m,n+1} \,|\, \mathcal{F}_{j,n}] = f_{m,n} \quad \text{for all $j \geq m$} \implies \mathbb{E}[f_{m,n+1} \,|\, \mathcal{G}_n^1] = f_{m,n}.
\end{align*}
The same can be done with $f_{m-1,n}$, and thus $\widetilde{f}$ is a martingale with respect to $\mathcal{G}^1$. A similar argument works for $\widetilde{a}$.

Now, we can write
\begin{align*}
  \mathbb{E}\Bigg( \sum_{m=1}^M \sum_{n=1}^N (\Delta_2 \widetilde{f}_{m,n})^2 (\widetilde{a}_{m,n-1})^2 \Bigg) &= \mathbb{E}\Bigg( \sum_{m=1}^M \mathbb{E} \Bigg[ \sum_{n=1}^N  (\Delta_2 \widetilde{f}_{m,n})^2 (\widetilde{a}_{m,n-1})^2 \,\Big|\, \mathcal{G}_0^1 \Bigg] \Bigg).
\end{align*}
If we now apply Theorem \ref{Theorem1} to the inner conditional expectation we obtain
\begin{align*}
  \mathbb{E}\Bigg( \sum_{m=1}^M \sum_{n=1}^N (\Delta_2 \widetilde{f}_{m,n})^2 (\widetilde{a}_{m,n-1})^2 \Bigg) &\lesssim
    \mathbb{E}\Bigg(\sum_{m=1}^M \mathbb{E}\Bigg[ \widetilde{f}_{m,N}^2 \widetilde{a}_{m,N}^2 + \sum_{n=1}^N (\widetilde{f}_{m,n}^2 + \widetilde{f}_{m,n-1}^2) (\Delta_2 \widetilde{a}_{m,n})^2 \,\Big|\, \mathcal{G}_0^1 \Bigg]\Bigg).
\end{align*}

Let us analyze each term separately. The first term can be re-written in terms of the original $f$ and $a$ as follows:
\begin{align*}
  \mathbb{E} \Bigg( \sum_{m=1}^M \mathbb{E}\Big[ (\Delta_1 f_{m,N})^2 a_{m-1,N}^2 \,|\, \mathcal{G}_0^1 \Big] \Bigg) = 
    \mathbb{E} \Bigg( \sum_{m=1}^M (\Delta_1 f_{m,N})^2 a_{m-1,N}^2 \Bigg).
\end{align*}
If we fix $N$, $f_{m,N}$ and $a_{m,N}$ are martingales with respect to the filtration $(\mathcal{F}_{m,N})_{m \geq 0}$.
Thus, we can apply Theorem \ref{Theorem1} to this expectation and obtain
\begin{align*}
  \mathbb{E} \Bigg( \sum_{m=1}^M (\Delta_1 f_{m,N})^2 a_{m-1,N}^2 \Bigg) &\lesssim \mathbb{E} \Bigg( f_{M,N}^2 a_{M,N}^2 + \sum_{m=1}^M (f_{m,N}^2 + f_{m-1,N}^2) (\Delta_1 a_{m,N})^2 \Bigg) \\
  &\lesssim \mathbb{E} \Bigg( f_{M,N}^2 a_{M,N}^2 + \sum_{m=1}^M (f_{m,N}^*)^2 (\Delta_1 a_{m,N})^2 \Bigg).
\end{align*}
Here we have used the ``stopped'' version of Doob's maximal function:
\begin{align*}
f_{m,n}^* = \sup_{\substack{i \leq m \\ j \leq n}} |f_{i,j}|,
\end{align*}
though it is clear that we could have used the smaller
\begin{align*}
f_{m,n}^{b} = \sup_{\substack{m-1 \leq i \leq m \\ n-1 \leq j \leq n}} |f_{i,j}|.
\end{align*}

Next, we analyze the term
\begin{align*}
  \mathbb{E} \Bigg( \sum_{m=1}^M \mathbb{E} \Bigg( \sum_{n=1}^N \widetilde{f}_{m,n}^2 (\Delta_2 \widetilde{a}_{m,n})^2 \Bigg) \Bigg).
\end{align*}
After exchanging the order of summation and replacing $\widetilde{f}$ and $\widetilde{a}$ with their corresponding definitions, this term takes the form
\begin{align*}
  \mathbb{E} \Bigg( \sum_{n=1}^N \mathbb{E} \Bigg( \sum_{m=1}^M (\Delta_1 f_{m,n})^2 (\Delta_2 a_{m-1,n})^2 \Bigg) \Bigg).
\end{align*}
Define now the filtration
\begin{align*}
  \mathcal{G}_m^2 = \sigma\Bigg( \bigcup_{n=0}^\infty \mathcal{F}_{m,n} \Bigg).
\end{align*}
If we fix $n$, then the sequences $f_{m,n}$ and $\Delta_2 a_{m,n}$ are martingales with respect to this filtration.
Thus, by Theorem \ref{Theorem1}, we obtain
\begin{align*}
  \mathbb{E} \Bigg( \sum_{n=1}^N \mathbb{E} \Bigg( \sum_{m=1}^M (\Delta_1 f_{m,n})^2 (\Delta_2 a_{m-1,n})^2 \Bigg) \Bigg) 
  &\lesssim \mathbb{E}\Bigg( \sum_{n=1}^N f_{M,n}^2 (\Delta_2 a_{M,n})^2 + \sum_{m=1}^M \sum_{n=1}^N (f_{m,n}^2 + f_{m-1,n}^2) (\Delta a_{m,n})^2 \Bigg) \\
  &\lesssim \mathbb{E}\Bigg( \sum_{n=1}^N f_{M,n}^2 (\Delta_2 a_{M,n})^2 + \sum_{m=1}^M \sum_{n=1}^N (f_{m,n}^*)^2 (\Delta a_{m,n})^2 \Bigg).
\end{align*}
It is clear that something very similar can be done with the last term, which has $\widetilde{f}_{m,n-1}$. Putting all the estimates together:
\begin{align*}
  &\mathbb{E}\Bigg( \sum_{m=1}^M \sum_{n=1}^N (\Delta f_{m,n})^2 a_{m-1,n-1}^2 \Bigg) \lesssim \\
  &\mathbb{E}\Bigg( f_{M,N}a_{M,N}^2 +  
  \sum_{m=1}^N (f_{N,m}^*)^2 (\Delta_2 a_{N,m})^2 
  +\sum_{n=1}^N (f_{M,n}^*)^2 (\Delta_1 a_{M,n})^2 
  + \sum_{m=1}^M \sum_{n=1}^N (f_{m,n}^*)^2 (\Delta a_{m,n})^2
  \Bigg).
\end{align*}

We have found an upper bound consisting of the expectation of the sum of four terms.
If one looks closely, these four terms are sums over what could be considered ``boundaries'' various codimensions of the integer lattice box
\begin{align*}
  \{(m,n) \in \mathbb{Z}^2:\, 1 \leq m \leq M \text{ and } 1 \leq n \leq N\}.
\end{align*}
In particular, the first term is just the upper-right-hand corner of the box, the middle two terms are sums over the upper horizontal edge and right vertical edge, and
the last term is the sum over the whole box.

We can write this upper bound more neatly by introducing some notation.
For a subset of indices $\mathcal{I} \subseteq \{1,2\}$, define
\begin{align*}
  \partial_{\mathcal{I}}(M_1,M_2) = \{(m_1,m_2) \in \mathbb{Z}^2:\, 1 \leq m_1 \leq M_1,\, 1 \leq m_2 \leq M_2,\, m_i = M_i \text{ for all $i \in \mathcal{I}$} \}
\end{align*}
In particular,
\begin{align*}
  \partial_{\emptyset}(M_1, M_2) &= \{(M_1, M_2)\} \\
  \partial_{\{1\}}(M_1, M_2) &= \{(1,M_2), (2, M_2), \dots, (M_1,M_2)\}, \\
  \partial_{\{2\}}(M_1, M_2) &= \{(M_1,1), (M_1, 2), \dots, (M_1, M_2)\}, \\
  \partial_{\{1,2\}}(M_1, M_2) &= \{(m,n) \in \mathbb{Z}^2:\, 1 \leq m_1 \leq M_1 \text{ and } 1 \leq m_2 \leq M_2\}.
\end{align*}
Define also
\begin{align*}
  \Delta_{\emptyset} &= \text{Id} \\
  \Delta_{\{1,2\}} &= \Delta_1 \circ \Delta_2.
\end{align*}
Then, we have proved
\begin{align*}
  \mathbb{E}\Bigg( \sum_{m=1}^M \sum_{n=1}^N (\Delta f_{m,n})^2 a_{m-1,n-1}^2 \Bigg) \lesssim 
  \mathbb{E}\Bigg( \sum_{\mathcal{I} \subseteq \{1,2\}} \sum_{(m,n) \in \partial_{\mathcal{I}}(M_1,M_2)} (f^*_{m,n})^2 (\Delta_{\mathcal{I}} a_{m,n})^2 \Bigg).
\end{align*}

\subsection{Arbitrarily many parameters}

In the multiparameter setting, filtrations are indexed by $k$-tuples of non-negative integers.
For simplicity, we will let $\mathbb{N}$ include $0$, and denote by $\mathbb{N}^k$ the cartesian product of $k$ copies of $\mathbb{N}$.

For any integer $n \geq 1$, we will use $[n]$ to denote the set $\{1, 2, \dots, n\}$.
For any $i \in [k]$ let $\pi_i$ be the projection acting on $k$-tuples which ignores everything except the $i$-th coordinate.
In particular, if $m = (m_1, \dots, m_k)$ then $\pi_i(m) = m_i$.

Let $N_i(m)$ be the the $k$-tuple formed by incrementing the $i$-th coordinate by $1$:
\begin{align*}
  \pi_j(N_i(m)) = \begin{cases}
    m_i + 1 &\text{if } j = i \\
    m_i &\text{if } j \neq i
  \end{cases} \quad \text{for all }j \in [k].
\end{align*}

A $k$-parameter filtration is a collection of $\sigma$-algebras $\mathcal{F}_{m} \subseteq \Sigma$ indexed by $m \in \mathbb{N}^k$ which is increasing in each coordinate:
\begin{align*}
  \mathcal{F}_{m} \subseteq \mathcal{F}_{N_i(m)} \quad \text{for all $i \in [k]$.}
\end{align*}
We will assume that our filtration satisfies the $k$-parameter analogue of the \eqref{f4} condition of Cairoli-Walsh:
\begin{align} \label{f4} \tag{$F_4$}
  \mathbb{E}[ \mathbb{E}[f \,|\, \mathcal{F}_{a}] \,|\, \mathcal{F}_{b}] = \mathbb{E}[f \,|\, \mathcal{F}_{a \land b}] \quad \text{for all $a, b \in \mathbb{N}^k$}.
\end{align}
Here $a \land b$ is the $k$-tuple defined componentwise by
\begin{align*}
  (a \land b)_i = \min(a_i, b_i)
\end{align*}
for all $i \in [k]$.

A martingale with respect to the $k$-parameter filtration $\mathcal{F}$ is a sequence of integrable functions $f_{m}$ indexed by $m \in \mathbb{N}^k$ such that
$f_{m}$ is $\mathcal{F}_{m}$-measurable and
\begin{align*}
  \mathbb{E}[f_{N_i(m)} \,|\, \mathcal{F}_{m}] = f_{m}
\end{align*}
for all $i \in [k]$ and $m \in \mathbb{N}^k$.

Given a $k$-tuple $m$ with $m_i \geq 1$ define $P_i(m)$ as the same $k$-tuple but with the $i$-th coordinate replaced by $m_i-1$, similarly to $N_i$:
\begin{align*}
  \pi_j(P_i(m)) = \begin{cases}
    m_i - 1 &\text{if } j = i \\
    m_i &\text{if } j \neq i
  \end{cases} \quad \text{for all }j \in [k].
\end{align*}
This leads to a generalization of the backwards shift operator from the one-parameter case:
\begin{align*}
  (B_i f)_{m} = \begin{cases}
    0 &\text{if } m_i = 0 \\
    f_{P_i(m)} &\text{if } m_i \geq 1.
  \end{cases}
\end{align*}
Then, the difference operator along the $i$-th coordiante, denoted by $\Delta_i$, is defined simply by
\begin{align*}
  \Delta_i f = f - B_i f.
\end{align*}
It is easy to see that the backwards shift operators commute, and thus so do the directional difference operators $\Delta_i$.
This commutativity allows us to define the multiple-directional versions of $B$ and $\Delta$. For any set of indices $\mathcal{I} = \{i_1, i_2, \dots, i_{l}\} \subseteq [k]$ define
\begin{align*}
  \Delta_{\mathcal{I}} &= \Delta_{i_1} \circ \Delta_{i_2} \circ \dots \circ \Delta_{i_l}, \\
  B_{\mathcal{I}} &= B_{i_1} \circ B_{i_2} \circ \dots \circ B_{i_l}.
\end{align*}
When $\mathcal{I} = \emptyset$ then we define both $\Delta_{\mathcal{I}}$ and $B_{\mathcal{I}}$ as the identity.

Let us define a partial order $\leq$ on $k$-tuples by
\begin{align*}
  m \leq n \iff m_i \leq n_i \quad \text{for all $i \in [k]$}.
\end{align*}
Then, the integer-lattice box $\{(m_1, \dots, m_k) \in \mathbb{N}^k:\, 1 \leq m_i \leq M_i\}$ can be denoted simply by $\{m \in \mathbb{N}^k:\, 1 \leq m \leq M\}$.
We will be dealing with the ``boundary'' of boxes like these in various codimensions. To this end, let us define, for a subset $\mathcal{I} \subseteq [k]$ and $M \in \mathbb{N}^k$, the
slices
\begin{align*}
  \partial_{\mathcal{I}}(M) = \{m \in \mathbb{N}^k:\, 1 \leq m \leq M \text{ and } m_i = M_i \text{ for all }i \notin \mathcal{I}\}.
\end{align*}
Here we are using $1$ to denote the $k$-tuple consisting of all ones.

Finally, the ``stopped'' Doob's maximal function is defined as
\begin{align*}
f^*_m = \sup_{n \leq m} |f_n|.
\end{align*}
As in the two-parameter case, we could instead use
\begin{align*}
f^b_m = \sup_{m-1 \leq n \leq m} |f_n|,
\end{align*}
but we will not need this sharper version.

With these definitions, we are ready to state the $k$-parameter version of Theorem \ref{Theorem1}.
\begin{theorem}
  Let $\mathcal{F}$ be a $k$-parameter filtration satisfying the \eqref{f4} condition.
  Let $f$ and $a$ be $k$-parameter martingales.
  Suppose furthermore that $f_{m} a_{n} \in L^2$ for every $m,n \in \mathbb{N}^k$.
  Then, for every $M \in \mathbb{N}^k$ we have
  \begin{align} \label{multip::induction}
    \mathbb{E} \Bigg( \sum_{1 \leq m \leq M} (\Delta f_{m})^2 a^2_{m - 1} \Bigg) \lesssim \mathbb{E} \Bigg( \sum_{\mathcal{I} \subseteq [k]} \sum_{m \in \partial_{\mathcal{I}}(M)} (f^*_{m})^2 (\Delta_{\mathcal{I}} a)^2 \Bigg).
  \end{align}
\end{theorem}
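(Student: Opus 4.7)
The plan is to induct on the number of parameters $k$. The base case $k=1$ is exactly Theorem \ref{Theorem1}; for $k \geq 2$ we mirror the two-parameter argument just carried out, applying Theorem \ref{Theorem1} once in the $k$-th coordinate direction and then invoking the $(k-1)$-parameter inductive hypothesis on each of the resulting pieces.

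In more detail, write $m = (m', m_k)$ with $m' \in \mathbb{N}^{k-1}$ and $M' = (M_1, \dots, M_{k-1})$, and set $\widetilde{f}_m = \Delta_{[k-1]} f_m$ and $\widetilde{a}_{(m', m_k)} = a_{(m'-1, m_k)}$, so that $(\Delta f_m)^2 a_{m-1}^2 = (\Delta_k \widetilde{f}_m)^2 \widetilde{a}_{(m', m_k - 1)}^2$. Introducing the auxiliary filtration $\mathcal{G}_n = \sigma\bigl(\bigcup_{m' \in \mathbb{N}^{k-1}} \mathcal{F}_{(m', n)}\bigr)$ and repeating verbatim the \eqref{f4} calculation already carried out in the two-parameter case, one verifies that for each fixed $m'$ both $(\widetilde{f}_{(m', n)})_n$ and $(\widetilde{a}_{(m', n)})_n$ are one-parameter martingales with respect to $(\mathcal{G}_n)_n$. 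Applying Theorem \ref{Theorem1} in $n$ for every $m'$ and summing over $1 \leq m' \leq M'$ reduces the problem to bounding the expectation of
\begin{align*}
\sum_{1 \leq m' \leq M'} \widetilde{f}_{(m', M_k)}^2 \widetilde{a}_{(m', M_k)}^2 + \sum_{1 \leq m' \leq M'} \sum_{n=1}^{M_k} \bigl(\widetilde{f}_{(m', n)}^2 + \widetilde{f}_{(m', n-1)}^2\bigr) (\Delta_k \widetilde{a}_{(m', n)})^2.
\end{align*}

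Each summand above involves only $k-1$ coordinates of differencing in $f$, so the inductive hypothesis is applicable. For the boundary term, $f_{(\cdot, M_k)}$ and $a_{(\cdot, M_k)}$ are $(k-1)$-parameter martingales by \eqref{f4}, and the inductive hypothesis bounds the expected sum by contributions over strata $\partial_{\mathcal{J}}(M')$ for $\mathcal{J} \subseteq [k-1]$; identifying $\partial_{\mathcal{J}}(M') \times \{M_k\}$ with $\partial_{\mathcal{J}}(M)$ (viewing $\mathcal{J}$ as a subset of $[k]$ not containing $k$) recovers exactly the terms of the right-hand side of \eqref{multip::induction} with $k \notin \mathcal{I}$. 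For the sum terms, \eqref{f4} further ensures that $\Delta_k a_{(\cdot, n)}$ is a $(k-1)$-parameter martingale in $m'$ for each fixed $n \geq 1$. Using the identity $\Delta_{\mathcal{J}} \circ \Delta_k = \Delta_{\mathcal{J} \cup \{k\}}$ and the crude pointwise bound $\widetilde{f}_{(m', n-1)}^2 \leq (f^*_{(m', n)})^2$, the inductive hypothesis bounds the inner $m'$-sum in terms of $\partial_{\mathcal{J}}(M')$-sums for $\mathcal{J} \subseteq [k-1]$; summing over $n \in \{1, \dots, M_k\}$ then reassembles these into the full stratum sums over $\partial_{\mathcal{J} \cup \{k\}}(M)$, sweeping out precisely the subsets $\mathcal{I} \subseteq [k]$ containing $k$. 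Combining the two cases closes the induction.

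The argument introduces no new idea beyond the two-parameter proof; the principal difficulty is bookkeeping, namely tracking how the strata $\partial_{\mathcal{I}}(M)$ arise at each stage of the induction and systematically applying \eqref{f4} to confirm that every restricted or differenced process is a martingale in the appropriate subfiltration.
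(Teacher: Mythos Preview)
Your approach is correct and essentially identical to the paper's: induct on $k$, apply Theorem \ref{Theorem1} in the $k$-th direction using the auxiliary filtration $\mathcal{G}_n$, then invoke the $(k-1)$-parameter hypothesis on the boundary piece (producing the strata with $k \notin \mathcal{I}$) and on each fixed-$n$ slice of the sum pieces (producing the strata with $k \in \mathcal{I}$).

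One notational slip to fix: the ``crude pointwise bound $\widetilde{f}_{(m', n-1)}^2 \leq (f^*_{(m', n)})^2$'' is false as written, since $\widetilde{f} = \Delta_{[k-1]} f$ is a $(k-1)$-fold martingale difference and is certainly not dominated pointwise by $f^*$. What you actually need (and presumably mean) is that the inductive hypothesis is applied to the $(k-1)$-parameter martingale $f_{(\cdot,\, n-1)}$ paired with $\Delta_k a_{(\cdot,\, n)}$, and the output involves the $(k-1)$-parameter maximal function $\sup_{n' \leq m'} |f_{(n', n-1)}|$, which is then trivially dominated by the full $k$-parameter $f^*_{(m', n)}$. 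This is exactly how the paper handles its $T_j$ term.
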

\begin{proof}
  We will prove this by induction on the number of parameters $k$.
  Note that Theorem \ref{Theorem1} implies \eqref{multip::induction} when $k=1$. So we take $k \geq 2$ and assume that \eqref{multip::induction} holds for $(k-1)$-parameter martingales.

  We start by writing the left-hand-side as
  \begin{align*}
    \text{LHS} &= \sum_{1 \leq \overline{m} \leq \overline{M}} \mathbb{E} \Bigg( \sum_{j=1}^{M_k} (\Delta f_{(\overline{m}, j)})^2 (Ba)^2_{(\overline{m}, j)} \Bigg) =: \circledast.
  \end{align*}

  For every integer $j \geq 0$ let
  \begin{align*}
    \mathcal{G}_j = \bigcup_{\substack{m \in \mathbb{N}^k \\ m_k = j}} \mathcal{F}_{m}.
  \end{align*}
  The sequence $(\mathcal{G}_j)_{j \geq 0}$ is a one-parameter filtration, and each $\mathcal{G}_j$ is contained in $\Sigma$.

  We can continue by adding the conditional expectation with respect to $\mathcal{G}_0$:
  \begin{align} \label{kp:intermediate1}
    \circledast &= \sum_{1 \leq \overline{m} \leq \overline{M}} \mathbb{E}\Bigg( \mathbb{E} \Bigg( \sum_{j=1}^{M_k} (\Delta f_{(\overline{m}, j)})^2 (Ba)^2_{(\overline{m}, j)} \,\Big|\, \mathcal{G}_0 \Bigg) \Bigg).
  \end{align}
  Now define the sequences
  \begin{align*}
    \widetilde{f}_{m} &= (\Delta_{k-1} \circ \dots \circ \Delta_{1})f_{m}, \\
    \widetilde{a}_{m} &= (B_{k-1} \circ \dots \circ B_{1})a_{m}.
  \end{align*}
  Notice that we can write the inner conditional expectation as follows:
  \begin{align*}
    \mathbb{E} \Bigg( \sum_{j=1}^{M_k} (\Delta f_{(\overline{m}, j)})^2 (Ba)^2_{(\overline{m}, j)} \,\Big|\, \mathcal{G}_0 \Bigg)
    = \mathbb{E} \Bigg( \sum_{j=1}^{M_k} (\Delta_k \widetilde{f}_{(\overline{m},j)})^2 (B_k \widetilde{a})^2_{(\overline{m},j)} \,\Big|\, \mathcal{G}_0 \Bigg).
  \end{align*}
  Since, at this point, we are only summing over $j$, we will drop the dependence on $\overline{m}$ and just write $\widetilde{f}_j$ and $\widetilde{a}_j$ for $\widetilde{f}_{(\overline{m}, j)}$ and $\widetilde{a}_{(\overline{m}, j)}$ respectively.

  The sequences $(\widetilde{f}_{j})_{j \geq 0}$ and $(\widetilde{a}_{j})_{j \geq 0}$ are martingales with respect to the filtration $(\mathcal{G}_j)_{j \geq 0}$.
  So, by the one-parameter case (Theorem \ref{Theorem1} acting on the $k$-th parameter), we have
  \begin{align*}
    \mathbb{E} \Bigg( \sum_{j=1}^{M_k} (\Delta_k \widetilde{f}_{j})^2 (B_k \widetilde{a})^2_{j} \,\Big|\, \mathcal{G}_0 \Bigg) &\lesssim
      \mathbb{E} \Bigg( \widetilde{f}_{M_k}^2 \widetilde{a}_{M_k}^2 + \sum_{j=1}^{M_k} (\widetilde{f}_{j}^2 + B_k \widetilde{f}_j^2) (\Delta_k \widetilde{a})^2_{j} \,\Big|\, \mathcal{G}_0 \Bigg).
  \end{align*}
  Plugging this into \eqref{kp:intermediate1} we obtain
  \begin{align*}
    \text{LHS} &\lesssim \mathbb{E}\Bigg( \sum_{1 \leq \overline{m} \leq \overline{M}} \Bigg( \widetilde{f}_{\overline{m}, M_k}^2 \widetilde{a}_{\overline{m}, M_k}^2
    + \sum_{j=1}^{M_k} (\widetilde{f}_{(\overline{m},j)}^2 + B_k \widetilde{f}_{(\overline{m},j)}^2) (\Delta_k \widetilde{a})^2_{(\overline{m},j)} \Bigg) \Bigg) \\
    &= \mathbb{E}\Bigg( \sum_{1 \leq \overline{m} \leq \overline{M}} \widetilde{f}_{\overline{m}, M_k}^2 \widetilde{a}_{\overline{m}, M_k}^2 \Bigg)
    + \sum_{j=1}^{M_k} \mathbb{E}\Bigg( \sum_{1 \leq \overline{m} \leq \overline{M}} (\widetilde{f}_{(\overline{m},j)}^2 + B_k \widetilde{f}_{(\overline{m},j)}^2) (\Delta_k \widetilde{a})^2_{(\overline{m},j)} \Bigg).
  \end{align*}
  These terms can be now handled by the induction hypothesis.
  Indeed, the first can be written as
  \begin{align*}
    \mathbb{E}\Bigg( \sum_{1 \leq \overline{m} \leq \overline{M}} (\Delta_{[k-1]} f_{\overline{m}, M_k})^2 (B_{[k-1]} a)_{\overline{m}, M_k}^2 \Bigg).
  \end{align*}
  By the induction hypothesis, this is bounded (up to a constant) by
  \begin{align*}
    \mathbb{E}\Bigg( \sum_{\mathcal{I} \subseteq [k-1]} \sum_{m \in \partial_{\mathcal{I}}(M)} (f_{m}^*)^2 (\Delta_{\mathcal{I}} a_{m})^2 \Bigg).
  \end{align*}

  The second term splits into two parts:
  \begin{align*}
    &\sum_{j=1}^{M_k} \mathbb{E}\Bigg( \sum_{1 \leq \overline{m} \leq \overline{M}} \widetilde{f}_{(\overline{m},j)}^2 (\Delta_k \widetilde{a})^2_{(\overline{m},j)} \Bigg) +
    \sum_{j=1}^{M_k} \mathbb{E}\Bigg( \sum_{1 \leq \overline{m} \leq \overline{M}} B_k \widetilde{f}_{(\overline{m},j)}^2 (\Delta_k \widetilde{a})^2_{(\overline{m},j)} \Bigg) \\
    &= \sum_{j=1}^{M_k} S_j + \sum_{j=1}^{M_k} T_j.
  \end{align*}
  Again, we can handle $S_j$ and $T_j$ with the induction hypothesis.
  Recall that $\Delta$ commutes with $B$ (over any indices), so we can write $S_j$ as
  \begin{align*}
    S_j = \mathbb{E}\Bigg( \sum_{1 \leq \overline{m} \leq \overline{M}} (\Delta_{[k-1]} f_{\overline{m}, j})^2 (B_{[k-1]}(\Delta_k a)_{\overline{m}, j})^2 \Bigg).
  \end{align*}
  Thus, by the induction hypothesis,
  \begin{align*}
    S_j \lesssim \mathbb{E}\Bigg( \sum_{\mathcal{I} \subseteq [k-1]} \sum_{\overline{m} \in \partial_{\mathcal{I}}(\overline{M})} (f_{\overline{m}, j}^*)^2 (\Delta_{\mathcal{I}} \Delta_k a_{\overline{m}, j})^2  \Bigg).
  \end{align*}
  Summing over $j$:
  \begin{align*}
    \sum_{j=1}^{M_k} S_j &\lesssim \mathbb{E}\Bigg( \sum_{j=1}^{M_k}\sum_{\mathcal{I} \subseteq [k-1]} \sum_{\overline{m} \in \partial_{\mathcal{I}}(\overline{M})} (f_{\overline{m}, j}^*)^2 (\Delta_{\mathcal{I}} \Delta_k a_{\overline{m}, j})^2 \Bigg) \\
    &= \mathbb{E}\Bigg( \sum_{j=1}^{M_k}\sum_{\mathcal{I} \subseteq [k-1]} \sum_{\overline{m} \in \partial_{\mathcal{I}}(\overline{M})} (f_{\overline{m}, j}^*)^2 (\Delta_{\mathcal{I} \cup \{k\}} a_{\overline{m}, j})^2 \Bigg) \\
    &= \mathbb{E}\Bigg( \sum_{\substack{\mathcal{I} \subseteq [k] \\ \mathcal{I} \ni k}} \sum_{m \in \partial_{\mathcal{I}}(M)} (f_{m}^*)^2 (\Delta_{\mathcal{I}} a_{m})^2 \Bigg) \\
    &\leq \mathbb{E}\Bigg( \sum_{\mathcal{I} \subseteq [k]} \sum_{m \in \partial_{\mathcal{I}}(M)} (f_{m}^*)^2 (\Delta_{\mathcal{I}} a_{m})^2 \Bigg).
  \end{align*}
  The terms $T_j$ are handled exactly in the same way. The only thing one needs to notice is that $B_k f$ is a martingale with respect to the first $(k-1)$ parameters. Then
  \begin{align*}
    \sum_{j=1}^{M_k} T_j &\lesssim \mathbb{E}\Bigg( \sum_{j=1}^{M_k}\sum_{\mathcal{I} \subseteq [k-1]} \sum_{\overline{m} \in \partial_{\mathcal{I}}(\overline{M})} (f_{\overline{m}, j-1}^*)^2 (\Delta_{\mathcal{I}} \Delta_k a_{\overline{m}, j})^2 \Bigg) \\
    &\leq \mathbb{E}\Bigg( \sum_{j=1}^{M_k}\sum_{\mathcal{I} \subseteq [k-1]} \sum_{\overline{m} \in \partial_{\mathcal{I}}(\overline{M})} (f_{\overline{m}, j}^*)^2 (\Delta_{\mathcal{I}} \Delta_k a_{\overline{m}, j})^2 \Bigg) \\
    &\leq \mathbb{E}\Bigg( \sum_{\mathcal{I} \subseteq [k]} \sum_{m \in \partial_{\mathcal{I}}(M)} (f_{m}^*)^2 (\Delta_{\mathcal{I}} a_{m})^2 \Bigg).
  \end{align*}
\end{proof}

\section{Brossard's inequality}
\label{section.brossard}
In this section we prove Theorem \ref{TheoremC}. That is,
for a regular filtration $\mathcal{F}_{m}$ satisfying the \eqref{f4} condition, we have
\begin{align*}
P(Sf> \lambda) \lesssim P(E) + \lambda^{-2}\mathbb{E}[(f^*)^2;\, E^c],
\end{align*}
where $E = \{f^* > \lambda\}$.

We will assume that $f$ is ``stopped'', i.e.: $\Delta f_m = 0$
for all $m$ outside a sufficiently large box $m \leq M$.
Moreoever, by subtracting off the relevant terms in $\sum_m \Delta f_m$,
we may assume that $\Delta f_m$ also vanishes when $m_i = 0$ for any $i \in [k].$

Let $R$ be the regularity constant of the filtration. Recall that we then have
\begin{align*}
a_{N_i(m)} \leq R a_{m}
\end{align*}
for all non-negative martingales $a$.
In particular, for the product dyadic filtration, $R = 2$.

Define the \emph{enlargement} of any measurable set $E$ as
\begin{align*}
\Enl{E} = \{\ind_E^* > R^{-k-1}\}.
\end{align*}
One can check that $P(\Enl(E)) \lesssim P(E)$, where the implicit constant depends only on $k$ and $R$.

Define the martingale
\begin{align*}
a_m = \mathbb{E}\big( \ind_{\Enl(E)^c} \,|\, \mathcal{F}_m \big).
\end{align*}
This will serve as a ``stopping bump function'' for $f$. First, note that
$f^*_m \leq \lambda$ on the support of $a^*$. In particular, for $m \geq 1$
\begin{align*}
f^*_m > \lambda \implies \Delta a_m = 0.
\end{align*}
This follows from the regularity of the filtration. Indeed, if $|f_k(x)| > \lambda$
then $E$ contains an $\mathcal{F}_k$-measurable set containing $x$.
Enlarging the set (by the regularity condition) will increase the measurability of this set, and in particular $\Enl(E)$ will contain an $\mathcal{F}_{k-1}$-measurable set containing $x$. By definition, we then have $a_{m-1} = 0$ on $x$, and thus $\Delta a_m = 0$ using that $a$ is nonnegative.

We now proceed in the standard fashion:
\begin{align*}
P(Sf > \lambda) \leq P(\Enl^2(E)) + \lambda^{-2}\int_{F^c} \sum_{1 \leq m \leq M} (\Delta f_m)^2.
\end{align*}
The first term is bounded by $P(E)$, so we just focus on the second term.

Note that, on $F^c$, we have $a_{m-1} \gtrsim 1$. Again, this follows from the definition of the enlargement operator and the regularity of the filtration. This allows us to bound
\begin{align*}
\int_{F^c} \sum_{1 \leq m \leq M} (\Delta f_m)^2 &\lesssim \int \sum_{1 \leq m \leq M} (\Delta f_m)^2 a_{m-1}^2.
\end{align*}

We are now ready to apply Theorem \ref{TheoremB}:
\begin{align*}
\int \sum_{1 \leq m \leq M} (\Delta f_m)^2 a_{m-1}^2 &\lesssim
\int (f_M^*)^2 a_M^2 + \sum_{\substack{\mathcal{I} \subseteq [k]\\ \mathcal{I} \neq \emptyset}} \int \sum_{m \in \partial_{\mathcal{I}}(M)} (f_m^*)^2 (\Delta_{\mathcal{I}} a_m)^2 \\
&\lesssim 
\int (f_M^*)^2 a_M^2 + \lambda^{2}\sum_{\substack{\mathcal{I} \subseteq [k]\\ \mathcal{I} \neq \emptyset}} \int \sum_{m \in \partial_{\mathcal{I}}(M)}  (\Delta_{\mathcal{I}} a_m)^2.
\end{align*}
For the first term, note that $E$ is $\mathcal{F}_M$-measurable, so $a_M = \ind_{\Enl(E)^c}$ and we have
\begin{align*}
\int (f_M^*)^2 a_M^2 \leq \int_{f^* \leq \lambda} (f^*)^2.
\end{align*}

For the second term, observe that we are skipping the case of $\mathcal{I} = \emptyset$.
This means that we can subtract from $a$ an arbitrary constant since $\Delta_{\mathcal{I}}(1)_m = 0$ when $m \geq 1$. Fixing $\mathcal{I} \neq \emptyset$ we have, using that the square function is an $L^2$ isometry,
\begin{align*}
\int \sum_{m \in \partial_{\mathcal{I}}(M)}  (\Delta_{\mathcal{I}} a_m)^2 &= 
\int \sum_{m \in \partial_{\mathcal{I}}(M)}  (\Delta_{\mathcal{I}} (1-a)_m)^2 \\
&\leq \int (1-a_M)^2 = P(\Enl(E)) \lesssim P(E),
\end{align*}
which is what we wanted.

\bibliography{bibliography}
\bibliographystyle{abbrv}

\end{document}